\documentclass[12pt]{article}

\usepackage[utf8]{inputenc} 
\usepackage[T1]{fontenc}
\usepackage{url}
\usepackage{ifthen}
\usepackage{cite}
\usepackage{wrapfig}

\usepackage[cmex10]{amsmath}
\usepackage{macros}

\begin{document}
\title{Robust Alignment via Partial\\ Gromov-Wasserstein Distances}

\author{Xiaoyun Gong \and Sloan Nietert \and Ziv Goldfeld\thanks{Cornell University, \texttt{xg332@cornell.edu}, \texttt{nietert@cs.cornell.edu}, \texttt{goldfeld@cornell.edu}}}

\maketitle

\begin{abstract}
    The Gromov-Wasserstein (GW) problem provides a powerful framework for aligning heterogeneous datasets by matching their internal structures in a way that minimizes distortion. However, GW alignment is sensitive to data contamination by outliers, which can greatly distort the resulting matching scheme. To address this issue, we study robust GW alignment, where upon observing contaminated versions of the clean data distributions, our goal is to accurately estimate the GW alignment cost between the original (uncontaminated) measures. We propose an estimator based on the partial GW distance, which trims out a fraction of the mass from each distribution before optimally aligning the rest. The estimator is shown to be minimax optimal in the population setting and is near-optimal in the finite-sample regime, where the optimality gap originates only from the suboptimality of the plug-in estimator in the empirical estimation setting (i.e., without contamination). Towards the analysis, we derive new structural results pertaining to the approximate pseudo-metric structure of the partial GW distance. Overall, our results endow the partial GW distance with an operational meaning by posing it as a robust surrogate of the classical distance when the observed data may be contaminated.
\end{abstract}

\section{Introduction}
\label{sec:intro}

Alignment of heterogeneous datasets with varying modalities or semantics is fundamental to data science. The Gromov-Wasserstein (GW) distance~\citep{memoli2011gromov} offers~a mathematical framework for alignment by modeling datasets as metric measure (mm) spaces $(\cX,\mathsf{d}_\cX,\mu)$ and $(\cY,\mathsf{d}_\cY,\nu)$ and 
optimally~matching~them:
\begin{equation}
    \mathsf{GW}_{p,q}(\mu,\nu)\coloneqq \mspace{-3mu}\inf_{\pi\in\Pi(\mu,\nu)}\mspace{-6mu}\mathbb{E}_{\pi\otimes\pi}\big[%
 \Delta_q\big((X,X'),(Y,Y')\big)^p\big]^{\frac 1p},\label{eq:GW_def}
\end{equation}
where $\Pi(\mu,\nu)$ is the set of all couplings of $\mu,\nu$ and $\Delta_q\big((x,x'),(y,y')\big)\coloneqq\big| \mathsf{d}_\cX(x,x')^q-\mathsf{d}_\cY(y,y')^q\big|$ is the distance distortion cost to be minimized; see \cref{fig:mm_spaces}. An optimal coupling $\pi^\star$ serves as an alignment plan that best preserves the pairwise distance distribution. The GW distance serves as an optimal transport (OT)-based $L^p$ relaxation of the classical Gromov-Hausdorff distance and defines a metric on the space of all mm spaces modulo the equivalence induced by isomorphism.\smallskip

\begin{figure}[!t]
\begin{center}
\includegraphics[width=0.43\textwidth]{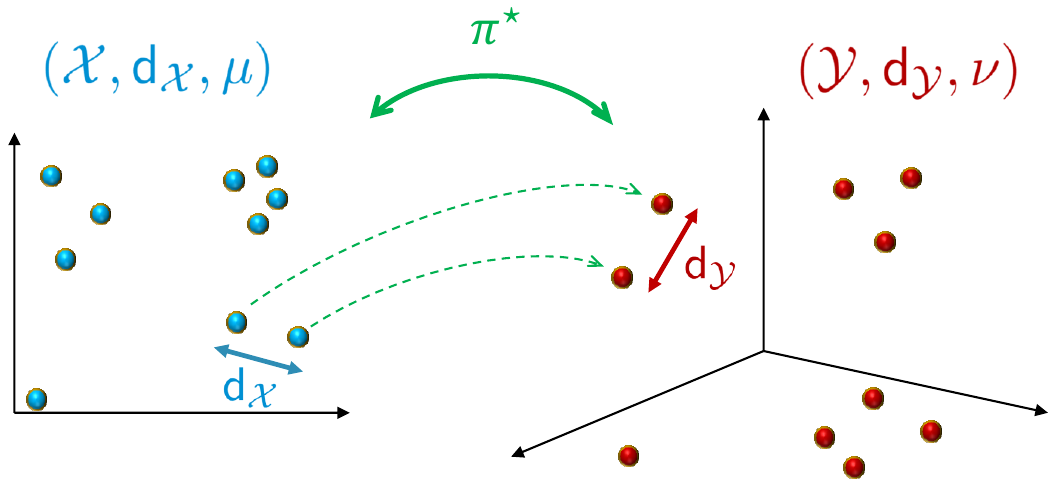}    
\end{center}
\caption{Mm space representation of datasets, where the metric structure encodes the geometry of each space, while the probability measure captures the density/intensity/weight of different points. An optimal GW alignment plan $\pi^\star\in\Pi(\mu,\nu)$ provides a matching that minimizes distance distortion.}\label{fig:mm_spaces}
\end{figure}

The GW framework has seen widespread usage in tasks involving alignment of heterogeneous datasets, spanning single-cell genomics \citep{blumberg2020mrec, cao2022manifold, demetci2022scot}, alignment of language models \citep{alvarez2018gromov}, shape matching \citep{koehl2023computing,memoli2009spectral}, graph matching \citep{chen2020graph,petric2019got,xu2019gromov,xu2019scalable}, heterogeneous domain adaptation \citep{sejourne2021unbalanced,yan2018semi}, and generative modeling \citep{bunne2019learning}. However, in practical scenarios, data can deviate from distributional assumptions due to measurement error, model misspecication, or even data poisoning attacks. The GW distance is highly sensitive to such corruptions as even a small outlier mass at a far away location can greatly distort the pairwise distance distribution, potentially affecting both the alignment plan and the resulting cost. In particular, $\GW(\mu,(1-\eps)\mu + \eps\delta_a) \to \infty$ as $\|a\| \to \infty$, while $\GW(\mu,\mu)$ clearly nullifies for any $\mu$. %
In this work, we focus on making the distance computation more robust to outliers, without explicitly addressing the robustness of the alignment plan itself. To that end, we study robust alignment via the partial GW problem, providing minimax optimal rates and risk bounds.

\subsection{Contributions}

We model data corruption as an $\eps$-perturbation of the original (clean) $\mu,\nu$ distributions in the total variation (TV) distance. Such TV perturbations allow for an $\eps$ amount of mass to be arbitrary relocated, capturing the injection of possibly adversarial outliers. Upon observing the corrupted distributions, our goal is to estimate the GW distance between the original $\mu,\nu$ in a minimax-optimal manner. For concreteness, we henceforth assume $p=q=2$, also known as the \emph{quadratic GW problem}, and omit the subscripts from our notation, simply writing $\GW(\mu,\nu)$. The key technical tool we employ for this robust estimation task is the \emph{partial GW distance}. Denoted $\PGW$, partial GW is defined akin to \eqref{eq:GW_def} up to replacing $\Pi(\mu,\nu)$ with the set of \emph{partial} couplings $\Pi^{\eps}(\mu,\nu) = \{ \pi\in \cM_+(\cX\times \cY): \pi(\cdot , \mathcal{Y}) \leq \mu, \pi(\mathcal{X}, \cdot) \leq \nu, \pi(\mathcal{X}, \mathcal{Y}) = 1-\eps \}$, where $\cM_+(\cX\times \cY)$ denotes the set of nonnegative measures on the product space $\cX\times \cY$. Consequently, $\PGW$ excludes an $\eps$ amount of mass for each space (deemed the outliers) and optimally aligns the~rest.\smallskip

Assuming that the clean $\mu,\nu$ belong to some family encoding distributional assumptions (e.g., bounded moments), we treat both the population limit and the finite-sample regimes. In the population limit setting, when no sampling is involved, given the TV $\eps$-contaminated versions $\tilde\mu,\tilde\nu$, we show that the partial GW distance $\PGW(\tilde\mu,\tilde\nu)$ is a minimax optimal robust estimator of the original $\GW(\mu,\nu)$ and characterize the corresponding estimation risk. To that end, we establish new structural properties of the partial GW distance, in particular, showing that it is an approximate pseudo-metric.  %
We also treat the finite-sample setting, where the clean populations $\mu,\nu$ are first sampled before the corruption mechanism manipulates an $\eps$ fraction of the data points. The finite-sample risk is shown to decompose into the population limit risk plus the error due to sampling (without corruptions), with the gap between our upper and lower bounds only coming from the suboptimality of the plug-in estimator of the GW distance between the populations. Overall, our results establish partial GW as an appropriate surrogate for alignment in the presence of (possibly) adversarial outliers.

\subsection{Related Work}
The notion of partial GW alignment has appeared in the literature under several definitions, some differing from ours. In \citet{kong2024outlier,bai2024efficient}, the strict marginal constraints from \eqref{eq:GW_def} are relaxed by allowing $\pi$ to be an arbitrary non-negative measure on $\cX\times\cY$ and penalizing the objective by Kullback-Leibler (KL) divergence or TV distance terms between the marginals of $\pi$ and the given $\mu,\nu$. \cite{chhoa2024metric} employed a definition similar to ours based on partial couplings, and explored topological and metric properties of the partial GW distance. Computational aspects of partial GW alignment were addressed in \cite{chapel2020partial} by providing an efficient Frank-Wolfe algorithm for evaluating  $\PGW$. A framework for partial graph matching that combines partial GW and partial OT was proposed in \cite{liu2020partial}.\smallskip

Also related is the body of work on outlier-robust OT \citep{balaji2020robust,mukherjee2021outlier,nietert2022outlier,nietert2023robust}.
In particular, \cite{nietert2023robust} considered robust distribution estimation under the Wasserstein distance, which is closely related to the question of estimating the distance itself from corrupted observations. Our modeling assumption of TV $\eps$-corruption is inspired by that work, as well as some of the analysis techniques. Nevertheless, there are some key differences between robust estimation of OT and GW, which we highlight in the sequel. In particular, while partial GW offers a minimax-optimal estimator in the alignment setting, partial OT is strictly suboptimal. This discrepancy originates from the translation invariance of the GW distance which mitigates the effect of outliers in a way that allows establishing optimality.  

\medskip

\noindent\textbf{Notation.} Let $\|\cdot\|$ and $\langle \cdot, \cdot \rangle$ designate Euclidean norm and the inner product in $\mathbb{R}^d$, respectively. The class of Borel probability measures on $\mathcal{X} \subseteq \mathbb{R}^d$ is denoted by $\mathcal{P}(\mathcal{X})$, while $\cM_+(\cX)$ represents the set of nonnegative measures on $\cX$. For $\mu,\nu\in\cM_+(\cX)$, we write $\|\mu-\nu\|_\tv$ for the TV norm of their difference. Inequalities between nonnegative measures $\mu\leq \nu$ are understood in the set-wise sense, i.e., $\mu(A)\leq \nu(A),\,\forall A$. We use $\lesssim_x$ to denote inequalities up to constants that only depend on $x$; the subscript is dropped when the constant is universal. When both $f\lesssim g$ and $f\gtrsim g$, we write $f\asymp g$. For $a,b \in \mathbb{R}$, we write $a \vee b = \max \{ a,b \}$ and $a \land b = \min \{ a,b \}$.

\section{Robust Estimation of GW Alignment}
\label{sec:results}

In this section, we set up the robust estimation problem, present our main results, and provide corresponding discussion. All proofs are deferred to \cref{sec:proofs}.

\subsection{Metric structure}

An important technical result we employ is the fact that partial GW forms an approximate pseudo-metric. One readily observes that $\PGW$ is not a metric: it nullifies when $\|\mu - \nu\|_\tv \leq \eps$, even if $\mu\neq\nu$, and violates the triangle inequality. For the latter, consider $\mu = \delta_0$, $\nu = (1-\eps)\delta_0+\eps \delta_1$, and $\kappa = (1-2\eps)\delta_0+\eps \delta_1+\eps \delta_2$, for which $\PGW(\mu,\nu) = \PGW(\nu,\kappa)=0$ but $\PGW(\mu,\kappa)>0$.

\begin{proposition}[Approximate pseudo-metric]
\label{prop:metric}
For $\eps,\delta \in [0,1]$ and measures $\mu \!\in\! \cP(\cX),\, \nu \!\in\! \cP(\cY),\, \kappa \!\in\! \cP(\cZ)$, we~have
\begin{enumerate}[(i)]
    \item $\PGW(\mu,\nu) \geq 0$ with equality if and only if there exists an isometry $T:\cX \to \cY$ such that $\|T_\sharp \mu - \nu\|_\tv \leq \eps$;
    \item $\PGW(\mu,\nu) = \PGW(\nu,\mu)$;
    \item $\GW^{\eps + \delta}(\mu,\nu) \leq \PGW(\mu,\kappa) + \GW^{\delta}(\kappa,\nu)$.
\end{enumerate}
\end{proposition}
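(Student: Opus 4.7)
Symmetry (ii) is immediate: the map $\pi \mapsto \pi^\top$ swapping coordinates is a bijection between $\Pi^\eps(\mu,\nu)$ and $\Pi^\eps(\nu,\mu)$, and the integrand $(\mathsf{d}_\cX(x,x')^2 - \mathsf{d}_\cY(y,y')^2)^2$ defining $\PGW$ is invariant under this swap.

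For (i), nonnegativity is clear. In the ``if'' direction, given an isometry $T\colon\cX\to\cY$ with $\|T_\sharp\mu - \nu\|_\tv \leq \eps$, the coupling representation of TV yields $\sigma \leq T_\sharp\mu \wedge \nu$ of mass at least $1-\eps$, which I rescale down to mass exactly $1-\eps$. Since isometries between metric spaces are injective, I pull $\sigma$ back via $T^{-1}$ on $T(\cX)$ to obtain $\rho \leq \mu$ with $T_\sharp\rho = \sigma$, and set $\pi := (\mathrm{id},T)_\sharp\rho \in \Pi^\eps(\mu,\nu)$. Since $\pi$ is supported on the graph of $T$, the cost integrand vanishes $\pi\otimes\pi$-a.e., giving $\PGW(\mu,\nu) = 0$. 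For the converse, take an optimizer $\pi^\star$ (existence by weak compactness), set $\rho := \pi^\star(\cdot,\cY)$, and apply the standard M\'emoli disintegration: zero cost forces the conditional $\pi^\star(\{x\}\times\cdot)$ to be a point mass at some $T(x)$ for $\rho$-a.e.\ $x$, and $\mathsf{d}_\cX(x,x')^2 = \mathsf{d}_\cY(T(x),T(x'))^2$ on $\operatorname{supp}\rho\times\operatorname{supp}\rho$. Since we work with Borel subsets of $\mathbb{R}^d$, the partial isometry $T$ extends to a rigid motion, which I restrict to $\cX$. Then $T_\sharp\rho \leq T_\sharp\mu \wedge \nu$ has mass $1-\eps$, so $\|T_\sharp\mu - \nu\|_\tv \leq \eps$.

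For (iii), I glue along $\kappa$. Take near-optimal $\pi_1 \in \Pi^\eps(\mu,\kappa)$, $\pi_2 \in \Pi^\delta(\kappa,\nu)$, and let $\kappa_1,\kappa_2 \leq \kappa$ denote their $\cZ$-marginals, of masses $1-\eps$ and $1-\delta$. Their meet $\kappa_{12} := \kappa_1 \wedge \kappa_2$ satisfies $\kappa_{12}(\cZ) \geq 1-\eps-\delta$ by inclusion--exclusion, since $\kappa_1 \vee \kappa_2 \leq \kappa$ has mass at most $1$. Using $f_i := d\kappa_{12}/d\kappa_i \in [0,1]$, restrict each coupling to $\tilde\pi_i := f_i(z)\pi_i$, so that $\tilde\pi_i \leq \pi_i$, the two share the common $\cZ$-marginal $\kappa_{12}$, and the opposite marginals remain dominated by $\mu,\nu$. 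Disintegrating $\tilde\pi_i$ against $\kappa_{12}$ yields kernels $K_i$, and the gluing $\gamma(dx,dz,dy) := \kappa_{12}(dz) K_1(z,dx) K_2(z,dy)$ has $(x,y)$-marginal $\pi$ of mass $\kappa_{12}(\cZ) \geq 1-\eps-\delta$. A uniform rescaling brings $\pi$ to mass exactly $1-\eps-\delta$ while preserving marginal domination, so $\pi \in \Pi^{\eps+\delta}(\mu,\nu)$. The cost bound then follows from the pointwise triangle inequality $|\mathsf{d}_\cX^2 - \mathsf{d}_\cY^2| \leq |\mathsf{d}_\cX^2 - \mathsf{d}_\cZ^2| + |\mathsf{d}_\cZ^2 - \mathsf{d}_\cY^2|$, Minkowski's inequality in $L^2(\gamma\otimes\gamma)$, and the monotonicity $\tilde\pi_i \leq \pi_i$, giving upper bounds $\PGW(\mu,\kappa)$ and $\GW^\delta(\kappa,\nu)$ on the two Minkowski terms.

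The step I expect to be most delicate is the converse of (i): disintegration only produces an isometry on $\operatorname{supp}\rho$, and promoting it to an isometry defined on all of $\cX$ while respecting the TV constraint relies on the Euclidean ambient structure (via extension by rigid motion). The gluing in (iii) is routine once one identifies $\kappa_{12} = \kappa_1 \wedge \kappa_2$ as the natural bridge, though some care is needed to rescale $\pi$ to exactly mass $1-\eps-\delta$ without disrupting the cost bound, which works because the cost integral is homogeneous of degree two in $\pi$ and the rescaling factor is at most one.
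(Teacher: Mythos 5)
Your proof is correct and follows essentially the same strategy as the paper: for (i,$\Leftarrow$), push $\mu$ forward, intersect with $\nu$, pull back and rescale; for (i,$\Rightarrow$), invoke the zero-GW isomorphism characterization on the normalized marginals of the optimizer and extend the resulting partial isometry (a step both you and the paper treat somewhat informally, though you flag it more explicitly); and for (iii), glue the two partial couplings along the common $\cZ$-mass $\kappa_1\wedge\kappa_2$, exactly the paper's set $B$, and rescale to mass $1-\eps-\delta$. The only presentational deviation is in the cost bound for (iii): you apply the pointwise triangle inequality plus Minkowski in $L^2(\gamma\otimes\gamma)$ together with the monotonicity $\tilde\pi_i\leq\pi_i$, whereas the paper invokes the GW triangle inequality between the (normalized) intermediate marginals as a black box; these are the same computation, since the GW triangle inequality is itself proved by gluing and Minkowski, so your version is simply more self-contained.
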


The first two properties follow rather directly from the definition of $\PGW$. The last result is obtained by gluing together two feasible partial couplings for the $\PGW$ and $\GW^{\delta}$ problems into a feasible partial coupling for the $\GW^{\eps + \delta}$ problem.

\subsection{Robust Estimation Setting and Main Results}%

We now examine robust alignment, first in the population (infinite-sample) limit and then in the finite-sample regime. 

\medskip
\noindent\textbf{Population limit.} 
Beginning with clean measures $\mu$ and $\nu$, we face a TV-constrained adversary that produces contaminated measures $\tilde{\mu},\tilde{\nu}$ satisfying $\|\tilde{\mu} - \mu\|_\tv, \|\tilde{\nu} - \nu\|_{\tv} \leq \eps$. We aim to recover $\GW(\mu,\nu)$ from $\tilde{\mu},\tilde{\nu}$ using an estimator $\sfT$. Given families $\cG_1, \cG_2 \subseteq \cP(\R^d)$ encoding prior knowledge of $\mu$ and $\nu$, respectively, the worst-case risk incurred by $\sfT$ is
\begin{equation*}
    R_{\infty}(\sfT,\cG_1,\cG_2,\eps) \defeq \mspace{-5mu}\sup_{\substack{\mu\in\cG_1\\\nu \in\cG_2}}\sup_{\substack{\tilde{\mu}, \tilde{\nu}:\\
    \|\tilde{\mu} - \mu\|_\tv\leq \eps\\ \|\tilde{\nu} - \nu\|_{\tv} \leq \eps}}\mspace{-10mu} |\sfT(\tilde{\mu},\tilde{\nu}) - \GW(\mu,\nu)|,
\end{equation*}
with the minimax robust estimation risk then given by 
\[
R_{\infty}(\cG_1,\cG_2,\eps) \defeq \inf_{\sfT}R_{\infty}(\sfT,\cG_1,\cG_2,\eps).
\]
This setting is termed the \emph{population limit} since no sampling occurs.
When $\cG_1 = \cG_2 = \cG$, dubbed the \emph{symmetric case}, we only write the distribution family once in the risk notation. \smallskip

Our main result characterizes the minimax risk for the bounded moment family $\cG_k(\sigma) \defeq \{\mu \in \cP(\R^d) : \mathbb{E}_{\mu}[\|X\|^k] \leq \sigma^k\}$ and establishes optimality of the partial GW estimator. 

\begin{theorem}
\label{thm:sym_pop-limit}
For $\eps \in [0, 0.33]$\footnote{The risk bound for $\PGW$ holds for any $\eps$ bounded away from $1/3$; we fix the upper bound to suppress nuisance dependencies on $(1-3\eps)^{-1}$. The minimax risk bound still holds for any $\eps$ bounded away from $1/2$, via a TV projection procedure of \cite{donoho1988automatic}. See \Cref{rem:breakdown-point,rem:tv-projection} for further discussion.}, $\sigma \geq 0$, and $k \geq 4$, we have
\begin{align*}
    R_{\infty}(\PGW, \cG_{k}(\sigma),\eps) &\asymp R_{\infty}(\cG_k(\sigma),\eps) \asymp \sigma^2 \eps^{\frac{1}{2} - \frac{2}{k}}.
\end{align*}
\end{theorem}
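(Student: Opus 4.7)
The plan is to establish matching upper and lower bounds of order $\sigma^2 \eps^{1/2 - 2/k}$, proved independently. For the upper bound, fix clean $\mu, \nu \in \cG_k(\sigma)$ and TV-contaminated $\tilde\mu, \tilde\nu$, and decompose $\tilde\mu = \mu' + \mu_o$ with $\mu' = \mu \wedge \tilde\mu \leq \mu$ satisfying $\mu'(\R^d) \geq 1 - \eps$ (likewise $\tilde\nu = \nu' + \nu_o$). I will show $\PGW(\tilde\mu, \tilde\nu) \leq \GW(\mu, \nu) + O(\sigma^2 \eps^{1/2 - 2/k})$ and its reverse by symmetric constructions. For the forward direction, take an optimal $\pi^\star \in \Pi(\mu, \nu)$ and use a standard sub-coupling/gluing lemma to extract $\pi_a \leq \pi^\star$ of mass $\geq 1 - 2\eps$ whose marginals are dominated by $\mu', \nu'$. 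Augment via a coupling $\pi_b$ of mass $\eps$ supported on the unused \emph{clean overlap} residuals $\mu' - \pi_{a,1}$ and $\nu' - \pi_{a,2}$ to form $\pi = \pi_a + \pi_b \in \Pi^\eps(\tilde\mu, \tilde\nu)$. The reverse direction is analogous: restrict an optimizer $\pi_p \in \Pi^\eps(\tilde\mu, \tilde\nu)$ to have marginals $\leq \mu, \nu$ (losing $\leq 2\eps$ mass) and extend to a full coupling via a residual coupling of mass $O(\eps)$ on sub-measures of $\mu, \nu$.

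To bound the cost of $\pi = \pi_a + \pi_b$, I expand $\mathbb{E}_{\pi \otimes \pi}[\Delta_2^2] = A + 2B + C$ bilinearly, where the three terms correspond to $\pi_a \otimes \pi_a$, $\pi_a \otimes \pi_b$, and $\pi_b \otimes \pi_b$. Since $\pi_a$ is a sub-measure of the respective optimizer and $\Delta_2^2 \geq 0$, the term $A$ is dominated by the optimal cost squared. For $B, C$, I apply the crude bound $\Delta_2^2 \lesssim \|x\|^4 + \|x'\|^4 + \|y\|^4 + \|y'\|^4$ (leveraging the translation invariance of $\GW$ to center the measures) together with the structural fact that both marginals of $\pi_b$ are sub-measures of the \emph{clean} $\mu, \nu$ of mass $O(\eps)$. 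By Hölder's inequality,
\begin{equation*}
\int \|x\|^4 \, d\pi_{b,1}(x) \leq \pi_{b,1}(\R^d)^{1 - 4/k} \biggl( \int \|x\|^k \, d\pi_{b,1} \biggr)^{4/k} \lesssim \eps^{1 - 4/k} \sigma^4,
\end{equation*}
and analogously for the other marginal, yielding $B + C \lesssim \sigma^4 \eps^{1 - 4/k}$. Taking square roots produces the target $\sigma^2 \eps^{1/2 - 2/k}$ correction.

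For the minimax lower bound, I employ a two-point Le Cam-style argument: set $\mu_0 = \mu_1 = (1-\eps)\delta_0 + \eps \delta_a$ with $\|a\| = \sigma \eps^{-1/k}$ (saturating the $k$-th moment constraint), $\nu_0 = \mu_0$, and $\nu_1 = \delta_0$. All four measures lie in $\cG_k(\sigma)$. The adversary picks $\tilde\mu = \mu_0$ and $\tilde\nu = \delta_0$, which are $\eps$-TV-close to the respective $\mu_i$ and $\nu_i$. A direct calculation gives $\GW(\mu_0, \nu_0) = 0$ and $\GW(\mu_0, \nu_1) = \GW(\mu_0, \delta_0) = \sqrt{2\eps(1-\eps)}\,\|a\|^2 \asymp \sigma^2 \eps^{1/2 - 2/k}$, so no estimator can match both values simultaneously.

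The main technical subtlety lies in the upper-bound coupling construction: the residual $\pi_b$ \emph{must} have marginals dominated by the clean $\mu, \nu$, since outliers in $\tilde\mu, \tilde\nu$ may have arbitrarily large norm and would break the moment-based Hölder estimate. Coupling only the unused clean overlap residuals---whose existence is guaranteed by $\|\tilde\mu - \mu\|_\tv \leq \eps$ ensuring $\mu \wedge \tilde\mu$ has mass $\geq 1 - \eps$---is the key structural choice that avoids this pitfall. Translation invariance of $\GW$ plays a parallel role in enabling the centered moment estimate, and the constraint $\eps \leq 1/3$ appears because the reverse direction loses up to $3\eps$ mass during restriction-and-extension, requiring $3\eps < 1$.
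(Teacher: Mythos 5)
Your lower bound is essentially the paper's own: the same two-point argument with $\mu_1 = (1-\eps)\delta_0 + \eps\delta_a$, $\|a\| = \sigma\eps^{-1/k}$, producing the indistinguishable observation $(\tilde\mu,\tilde\nu)=(\mu_1,\delta_0)$ and the gap $\GW(\mu_1,\delta_0) = \sqrt{2\eps(1-\eps)}\,\|a\|^2 \asymp \sigma^2\eps^{1/2-2/k}$.

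Your upper bound, however, takes a genuinely different route. The paper modularizes: \Cref{prop:metric}(iii) gives an approximate triangle inequality, \Cref{lem:bound_res} converts that into a perturbation bound of the form $|\GW(\mu,\nu) - \PGW(\tilde\mu,\tilde\nu)| \leq \rho(\mu,3\eps) + \rho(\nu,3\eps) + 3\eps\GW(\mu,\nu)$ (with the resilience functional $\rho$ of \Cref{def:res}), and \Cref{lem:resilienceK} bounds $\rho$ for $\cG_k(\sigma)$ via H\"older. You instead work directly with couplings in both directions: restrict the optimal $\pi^\star\in\Pi(\mu,\nu)$ to the clean overlaps $\mu\wedge\tilde\mu$, $\nu\wedge\tilde\nu$ and augment by a mass-$\eps$ residual coupling to produce a feasible element of $\Pi^\eps(\tilde\mu,\tilde\nu)$; conversely, restrict an optimal partial coupling for $\PGW(\tilde\mu,\tilde\nu)$ to have marginals dominated by the clean $\mu,\nu$ (costing $\leq 2\eps$ more mass) and extend to a full coupling. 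In each case the bilinear expansion $A+2B+C$ is controlled by H\"older on submeasures of $\mu,\nu$ of mass $O(\eps)$, exactly paralleling the key step inside \Cref{lem:resilienceK}. Both approaches are correct; the paper's buys modularity---\Cref{lem:bound_res} is already stated with the sampling terms $\GW(\mu,\hat\mu_n)$, so it is directly reused for the finite-sample \Cref{thm:sym_n_sample}, and \Cref{lem:resilienceK} is reused for the other families in \Cref{cor:pop-limit-examples}---while your direct construction is more self-contained for the population-limit statement and, by never passing through the $(1-3\eps)$-normalized $\GW^{3\eps}$, avoids the extra $3\eps\GW(\mu,\nu)$ slack term (and hence the separate check that $\GW(\mu,\nu)\lesssim\sigma^2$). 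Your observation that $\eps < 1/3$ is forced by the restrict-and-extend step in the reverse direction is the right way to see the breakdown point.

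One point worth making explicit if you were to write this out fully: the sub-coupling lemma you invoke---extracting $\pi_a\leq\pi^\star$ of mass $\geq 1-2\eps$ with marginals dominated by $\mu\wedge\tilde\mu$ and $\nu\wedge\tilde\nu$---needs a short justification via disintegration and a two-stage marginal restriction (restricting the second marginal does not undo the restriction of the first, since removing mass can only decrease marginals); the paper bypasses this by using the already-proved gluing argument of \Cref{prop:metric}(iii).
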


For the upper bound, we first bound the estimation error through resilience, a standard notion from robust statistics, and then bound resilience using Hölder's inequality. We employ a two-point lower bound, constructing two pairs of clean measures such that no estimator can perform well on both simultaneously, but from which the same corrupted observation can be generated (making them indistinguishable).

\begin{remark}[Comparison to OT]
 \Cref{thm:sym_pop-limit} shows that $\PGW$ achieves the optimal rate for $\cG_k(\sigma)$, the family of measures that satisfy a moment bound centered around zero. In fact, the centering around zero (or any point) is not necessary due to the translation-invariance of $\PGW$. In the Wasserstein case, however, partial Wasserstein estimation is only optimal if $\mu$ and $\nu$ have bounded moments around the same point \citep{nietert2023robust}.
\end{remark}

We now consider the families of sub-Gaussian distributions and those with bounded sliced $k$th moments:
\begin{align*}
    \cG_{\mathrm{subG}}(\sigma) &\mspace{-3mu}\defeq\mspace{-3mu} \Big\{\mu\mspace{-1mu}\in\mspace{-1mu}\mathcal{P}(\R^d)\mspace{-3mu}:\mathbb{E}_\mu\Big[e^{\frac{\langle \theta, X - \mathbb{E}_\mu[X] \rangle^2}{\sigma^2}}\Big] \mspace{-2mu}\leq\mspace{-2mu} 2,\, \forall \theta\mspace{-1mu}\in\mspace{-1mu} \unitsph\!\Big\}\\
    \overline{\cG}_k(\sigma) &\mspace{-3mu}\defeq\mspace{-3mu} \big\{\mu\mspace{-1mu}\in\mspace{-1mu}\mathcal{P}(\R^d)\mspace{-3mu}:\mathbb{E}_\mu[|\langle X,\theta \rangle|^k] \leq \sigma^k,\, \forall \theta\mspace{-1mu}\in\mspace{-1mu} \unitsph\big\}.
\end{align*}
\cref{thm:sym_pop-limit} implies tight risk bounds for these classes via inclusions in $\cG_k(\sigma)$. 

\begin{corollary}
\label{cor:pop-limit-examples}
For $\eps \in [0, 0.33]$, $\sigma \geq 0$, and $k \geq 4$,
\begin{align*}
    &R_{\infty}\mspace{-1mu}(\mspace{-1mu}\PGW\mspace{-4mu},\mspace{-1mu} \cG_{\mathrm{subG}}(\mspace{-1mu}\sigma\mspace{-1mu})\mspace{-1mu},\mspace{-1mu}\eps) \mspace{-2mu}\asymp\mspace{-2mu} R_{\infty}\mspace{-1mu}(\mspace{-1mu}\cG_{\mathrm{subG}}(\mspace{-1mu}\sigma\mspace{-1mu}),\eps) \mspace{-2mu}\asymp \mspace{-2 mu}\sigma^2\bigl(d\mspace{-1mu}+\mspace{-1mu}\log\tfrac{1}{\eps}\bigr) \eps^{\frac{1}{2}}\\
    &R_{\infty}(\PGW, \overline{\cG}_k(\sigma),\eps) \asymp  R_{\infty}(\overline{\cG}_k(\sigma),\eps) \asymp \sigma^2 (d/k + 1)\eps^{\frac{1}{2}-\frac{2}{k}}.
\end{align*}
\end{corollary}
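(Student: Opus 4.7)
The plan is to deduce both rates from \cref{thm:sym_pop-limit} via inclusions of $\cG_{\mathrm{subG}}(\sigma)$ and $\overline{\cG}_k(\sigma)$ into $\cG_k(\sigma')$ at appropriate radii $\sigma'$, and---for the sub-Gaussian case---optimizing over the exponent $k$. Since both $\GW$ and $\PGW$ are translation invariant, $R_\infty(\PGW,\cdot,\eps)$ and $R_\infty(\cdot,\eps)$ are unchanged under recentering each measure, so it suffices to verify moment bounds for centered distributions.

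For the upper bounds, I would establish two moment-inclusion lemmas. First, for centered $\mu\in\cG_{\mathrm{subG}}(\sigma)$, covering $\unitsph$ by a $1/2$-net $N$ of cardinality $|N|\leq 5^d$ gives $\|X\|\leq 2\max_{\theta\in N}|\langle\theta,X\rangle|$, and the sub-Gaussian maximal inequality then yields $\mathbb{E}_\mu[\|X\|^k]^{1/k}\lesssim \sigma\sqrt{\log|N|+k}\lesssim\sigma\sqrt{d+k}$, hence $\cG_{\mathrm{subG}}(\sigma)\subseteq\cG_k(C\sigma\sqrt{d+k})$. Second, for $\mu\in\overline{\cG}_k(\sigma)$, the rotational identity $\|X\|^k = c_{d,k}^{-1}\,\mathbb{E}_{\theta\sim\mathrm{Unif}(\unitsph)}[|\langle X,\theta\rangle|^k]$ with $c_{d,k}=\mathbb{E}_\theta[|\theta_1|^k]$, combined with Fubini and the Stirling asymptotic $c_{d,k}^{-1/k}\asymp\sqrt{d/k+1}$, gives $\mathbb{E}_\mu[\|X\|^k]^{1/k}\lesssim \sigma\sqrt{d/k+1}$, so $\overline{\cG}_k(\sigma)\subseteq\cG_k(C\sigma\sqrt{d/k+1})$. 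Substituting these inclusions into \cref{thm:sym_pop-limit} immediately yields the $\overline{\cG}_k$ upper bound $\sigma^2(d/k+1)\eps^{1/2-2/k}$, and the intermediate $\cG_{\mathrm{subG}}$ bound $\sigma^2(d+k)\eps^{1/2-2/k}$ for every $k\geq 4$; choosing $k=4\vee 2\log(1/\eps)$ near-minimizes $(d+k)\eps^{-2/k}$ and produces the target rate $\sigma^2(d+\log(1/\eps))\eps^{1/2}$.

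For the matching lower bounds, I would adapt the two-point construction from the lower bound of \cref{thm:sym_pop-limit} by rescaling its outlier component to the boundary of the larger family. For $\overline{\cG}_k(\sigma)$, replacing the atomic outlier with uniform mass on a sphere of radius $\asymp\sigma\sqrt{d/k+1}\,\eps^{-1/k}$ saturates the sliced-moment constraint via the same $c_{d,k}$ identity, while inflating the $\GW$ gap by the factor $d/k+1$. For $\cG_{\mathrm{subG}}(\sigma)$, I would smooth atomic components into Gaussians of variance $\sigma^2$ and place the outlier at radius $\asymp\sigma\sqrt{d+\log(1/\eps)}$; the resulting mixture remains in $\cG_{\mathrm{subG}}(O(\sigma))$ because an $\eps$-fraction of mass at that radius is consistent with the sub-Gaussian tail at scale $\sigma$. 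The main obstacle is the sharp sliced-to-Euclidean moment bound: the naive Jensen-style estimate yields only $\sqrt{d}$, so extracting the refined $\sqrt{d/k+1}$ scaling requires the sphere-averaging identity with precise $c_{d,k}$ asymptotics when $k\leq d$ and a separate covering-based argument when $k>d$.
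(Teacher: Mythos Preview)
Your upper bounds match the paper's approach exactly: the paper cites the inclusion $\overline{\cG}_k(\sigma)\subseteq\cG_k(O(\sigma\sqrt{1+d/k}))$ from \cite{nietert2022statistical} (which you rederive via the rotational identity) and handles the sub-Gaussian class by passing through $\overline{\cG}_k$ at $k=\lceil\log(1/\eps)\rceil$, arriving at the same optimization in $k$ that you perform. Your $\overline{\cG}_k$ lower bound via uniform mass on a sphere is a valid alternative to the paper's Gaussian outlier $\mu=(1-\eps)\delta_0+\eps\,\cN(0,c^2I_d)$ with $c=\sigma\eps^{-1/k}k^{-1/2}$; both saturate the sliced-moment constraint and produce the same $(d/k+1)$ factor, yours in a single shot rather than combining with the one-dimensional construction from \cref{thm:sym_pop-limit}.

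The sub-Gaussian lower bound, however, has a gap. Placing an $\eps$-mass Gaussian (or anything concentrated near a single center $a$) with $\|a\|\asymp\sigma\sqrt{d+\log(1/\eps)}$ does \emph{not} keep the mixture in $\cG_{\mathrm{subG}}(O(\sigma))$. The sub-Gaussian definition bounds one-dimensional projections, and along $\theta=a/\|a\|$ the mixture projects to a distribution with an $\eps$-atom near $\|a\|$; the constraint $\mathbb{E}[e^{\langle\theta,X-\mathbb{E} X\rangle^2/\sigma'^2}]\leq 2$ with $\sigma'=O(\sigma)$ then forces $\|a\|\lesssim\sigma\sqrt{\log(1/\eps)}$, with no dependence on $d$. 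To obtain the $d$ factor the outlier must be \emph{spread across all directions}---e.g., $\eps\,\cN(0,\sigma^2I_d)$, whose typical norm is $\sigma\sqrt{d}$ while every projection is $\cN(0,\sigma^2)$ and hence compatible with $\cG_{\mathrm{subG}}(O(\sigma))$; this yields $\GW(\mu,\delta_0)\gtrsim\sigma^2 d\sqrt{\eps}$. The $\log(1/\eps)$ term comes from the separate point-mass construction at radius $\sigma\sqrt{\log(1/\eps)}$. The paper obtains both pieces by invoking the analogous construction from \cite{nietert2023robust}.
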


\begin{remark}[Breakdown point]
\label{rem:breakdown-point}
We note that $\PGW$ admits no meaningful guarantees for $\eps \geq 1/3$. In this case, for any $\mu$ and $\nu$, one can identify $\eps$-corruptions $\tilde{\mu}$ and $\tilde{\nu}$ such that $\|\tilde{\mu} - \tilde{\nu}\|_\tv \leq 1-2\eps \leq \eps$, implying that $\PGW(\tilde{\mu},\tilde{\nu}) = 0$.
\end{remark}

\begin{remark}[TV projection estimator]
\label{rem:tv-projection}
An alternative estimator, employing the minimum distance method of \cite{donoho1988automatic}, is $\sfT(\tilde{\mu},\tilde{\nu})=\GW(\mu',\nu')$ where $\mu' \in \argmin_{\kappa \in \cG}\|\kappa - \tilde{\mu}\|_\tv$ and  $\nu' \in \argmin_{\kappa \in \cG}\|\kappa - \tilde{\nu}\|_\tv$. Note that this selection ensures that $\|\mu' - \mu\|_\tv \leq \|\mu' - \tilde{\mu}\|_\tv + \|\tilde{\mu} - \mu\|_\tv \leq 2\eps$ and, likewise, $\|\nu' - \nu\|_\tv \leq 2\eps$. From here, a similar resilience argument to that employed in the proof of \cref{thm:sym_pop-limit} shows that $\mathsf{T}$ achieves the minimax risk for the families considered above when $\eps$ is bounded away from $1/2$, beating the breakdown point of $1/3$ for $\PGW$ at the cost of some optimization complexity.
\end{remark}

\medskip

\noindent \textbf{Finite-sample setting.} We now introduce an additional sampling step. Starting with $\mu$ and $\nu$ as above, we first obtain empirical measures $\hat{\mu}_n=\frac
1n\sum_{i=1}^n\delta_{X_i},\hat{\nu}_n=\frac
1n\sum_{i=1}^n\delta_{Y_i}$, where $X_1,\ldots,X_n$ and $Y_1,\ldots,Y_n$ are i.i.d.\ samples from $\mu$ and $\nu$, respectively. The adversary then arbitrarily modifies an $\eps$-fraction of the clean data to produce $\tilde{X}_1, \dots, \tilde{X}_n, \tilde{Y}_1, \dots, \tilde{Y}_n$ such that $\frac{1}{n}\sum_{i=1}\mathds{1}\{\tilde{X}_i \neq X_i\}, \frac{1}{n}\sum_{i=1}\mathds{1}\{\tilde{Y}_i \neq Y_i\} \leq \eps$. Equivalently, the corrupted empirical measures $\tilde{\mu}_n$ and $\tilde{\nu}_n$ satisfy $\|\tilde{\mu}_n - \hat{\mu}_n\|_\tv, \|\tilde{\nu}_n - \hat{\nu}_n\|_{\tv} \leq \eps$. Let $\cM_n^\eps(\mu,\nu)$ denote the family of all joint distributions $P_n$ of the corrupted samples $\{\tilde{X}_i,\tilde{Y}_i\}_{i=1}^n$ which can arise through this process. We then let
\begin{align*}
    R_n(\sfT,\cG_1,\cG_2,\eps) &\mspace{-3mu}\defeq \mspace{-3mu}\sup_{\substack{\mu\in\cG_1\\\nu\in\cG_2}}\sup_{\substack{P_n\in\\  \cM_n^\eps(\mu,\nu)}}\mspace{-5mu}\mathbb{E}_{P_n}[|\sfT(\tilde{\mu}_n,\tilde{\nu}_n)\mspace{-3mu}-\mspace{-3mu}\GW(\mu,\nu)|]
\end{align*}
and define the minimax risk $R_n(\cG_1,\cG_2,\eps)$ analogously to the population limit variant, with the same convention when $\cG_1=\cG_2$. Lastly, we use the notation 
$\tau_n(\cG)\asymp \sup_{\mu \in \cG} \E[\GW(\hat{\mu}_n,\mu)]$ and $\tau_n(\cG_1,\cG_2)\asymp \sup_{\mu \in \cG_1, \nu \in \cG_2} \E[|\GW(\hat{\mu}_n,\hat{\nu}_n)-\GW(\mu,\nu)|]$ for the one-sample (null) and two-sample empirical convergence terms without corruption.\smallskip

We now decouple the errors due to TV corruption and sampling, controlling the finite-sample risk by the population limit risk plus the estimation~error when~$\eps = 0$.

\begin{theorem}
\label{thm:sym_n_sample}
For $\eps \in [0, 0.49]$ and any class $\cG$, it holds that
\begin{equation*}
    R_n(\cG,\eps) \gtrsim R_\infty(\cG,\eps/4) + R_n(\cG,0).
\end{equation*}
If further $\eps \leq 0.33$ and $\cG$ is restricted to any of the families from \cref{thm:sym_pop-limit} or \cref{cor:pop-limit-examples},
we have 
\begin{align*}
    R_n(\PGW, \cG,\eps) &\lesssim R_\infty(\cG,\eps) + \tau_n(\cG).
\end{align*}
\end{theorem}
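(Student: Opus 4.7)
The bound $R_n(\cG, \eps) \geq R_n(\cG, 0)$ is immediate, as an adversary with budget $\eps$ can always abstain from changing any sample. For the $R_\infty(\cG, \eps/4)$ part, I would adapt the two-point (Le Cam) construction underlying the population-limit lower bound of \cref{thm:sym_pop-limit}: that argument supplies clean pairs $(\mu_0,\nu_0),(\mu_1,\nu_1)\in\cG$ and a common contamination $(\tilde\mu,\tilde\nu)$ with $\|\tilde\mu-\mu_i\|_\tv,\|\tilde\nu-\nu_i\|_\tv\le\eps/4$ and $|\GW(\mu_0,\nu_0)-\GW(\mu_1,\nu_1)|\gtrsim R_\infty(\cG,\eps/4)$. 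Given samples $X_j\sim\mu_i$, the adversary would couple each $X_j$ maximally with $\tilde X_j\sim\tilde\mu$ so that $\Pr[X_j\neq\tilde X_j]\le\eps/4$ and overwrite every disagreeing $X_j$ with $\tilde X_j$ (and symmetrically on the $\nu$-side). A Chernoff bound keeps the number of overwrites below $\eps n$ with constant probability, so conditionally on this good event the observation law is $\tilde\mu^{\otimes n}\otimes\tilde\nu^{\otimes n}$ under \emph{both} hypotheses. Le Cam's two-point inequality then delivers the desired lower bound, up to the constant success probability of the good event.

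\textbf{Upper bound.} I would use the triangle split
\begin{equation*}
|\PGW(\tilde\mu_n,\tilde\nu_n)-\GW(\mu,\nu)| \;\le\; \underbrace{|\PGW(\tilde\mu_n,\tilde\nu_n)-\GW(\hat\mu_n,\hat\nu_n)|}_{A} \;+\; \underbrace{|\GW(\hat\mu_n,\hat\nu_n)-\GW(\mu,\nu)|}_{B}.
\end{equation*}
Since $\GW$ itself is a bona fide pseudo-metric, $B\le\GW(\hat\mu_n,\mu)+\GW(\hat\nu_n,\nu)$, so $\E[B]\le 2\tau_n(\cG)$ by definition. For term $A$, observe that $\tilde\mu_n,\tilde\nu_n$ are $\eps$-TV contaminations of $\hat\mu_n,\hat\nu_n$ by construction, so one can rerun the population-limit upper bound of \cref{thm:sym_pop-limit} with $(\hat\mu_n,\hat\nu_n)$ cast as the clean measures. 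Crucially, the underlying resilience/Hölder argument controls the deviation by a functional of just the $k$-th moments (or sub-Gaussian/sliced analogue) of the clean measures, with no formal requirement that these be in $\cG$; hence the bound holds pointwise in the samples. Taking expectations and invoking Jensen's inequality on the concave map $t\mapsto t^{2/k}$ transfers the empirical moment back to the population one, $\E\bigl[(\E_{\hat\mu_n}\|X\|^k)^{2/k}\bigr]\le(\E_\mu\|X\|^k)^{2/k}\le\sigma^2$, and likewise on the $\nu$-side, so $\E[A]\lesssim R_\infty(\cG,\eps)$.

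\textbf{Main obstacle.} The delicate step is the moment-transfer in term $A$: one needs the population-limit upper bound to be a concave (or at least Jensen-friendly) functional of the moments of the clean measures so that averaging over the empirical draws does not inflate the rate. For $\cG_k(\sigma)$ and $\overline{\cG}_k(\sigma)$ this is transparent from the resilience proof; for $\cG_{\mathrm{subG}}(\sigma)$ one either argues directly at the Orlicz-norm level or first truncates the samples to reduce to a polynomial moment bound, incurring only universal constants. A secondary bookkeeping issue in the lower bound is the Chernoff slack needed to keep the effective corruption budget below $\eps$, which is the reason for the factor $1/4$ appearing in $R_\infty(\cG,\eps/4)$.
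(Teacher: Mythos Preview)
Your lower bound matches the paper's: $R_n(\cG,\eps)\ge R_n(\cG,0)$ is immediate, and the $R_\infty(\cG,\eps/4)$ piece is obtained exactly by porting the population two-point construction to finite samples via a coupling/Chernoff argument (the paper simply cites \cite{nietert2023robust} for this).

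Your upper bound is correct but takes a different route. You split at the empirical pair $(\hat\mu_n,\hat\nu_n)$, which forces you to bound the resilience of the \emph{empirical} measures and then push the random moments back to the population via Jensen on $t\mapsto t^{2/k}$. The paper instead proves a single inequality (\cref{lem:bound_res}) that chains the approximate triangle inequality of \cref{prop:metric} through $\mu\to\hat\mu_n\to\tilde\mu_n\to\tilde\nu_n\to\hat\nu_n\to\nu$ and lands directly on
\[
|\PGW(\tilde\mu_n,\tilde\nu_n)-\GW(\mu,\nu)|\;\le\;\GW(\mu,\hat\mu_n)+\GW(\nu,\hat\nu_n)+\rho(\mu,3\eps)+\rho(\nu,3\eps)+3\eps\,\GW(\mu,\nu),
\]
so the resilience terms already involve the \emph{population} measures $\mu,\nu\in\cG$ and are deterministic. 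No Jensen transfer is needed, and the sub-Gaussian/sliced cases reduce to $\cG_k(\sigma)$ by the same inclusions used in \cref{cor:pop-limit-examples}, rather than requiring a separate Orlicz-level or truncation argument. In short, what you identify as the ``main obstacle'' is real for your decomposition but is simply absent in the paper's, at the cost of a slightly longer chaining lemma.
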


Note that $R_n(\cG,0)\!\leq\!2\tau_n(\cG)$, since $|\GW(\hat{\mu}_n,\!\hat{\nu}_n)-\GW(\mu,\!\nu)|$ $\leq \GW(\hat{\mu}_n,\mu) + \GW(\hat{\nu}_n,\nu)$. Thus, these bounds match up to the suboptimality of the plug-in estimator in approximating $\GW(\mu,\nu)$ and the application of the triangle inequality.\smallskip %

While fully characterizing $R_n(\cG,0)$ is out of scope, we note that $\tau_n(\cG) \lesssim n^{-1/d}$ under mild assumptions, as stated next.

\begin{proposition}
\label{prop:empirical-convergence}
If $d >8$, $k > 4$, then $\tau_n(\cG_k(\sigma)) \lesssim \sigma^2 n^{-1/d}$.
\end{proposition}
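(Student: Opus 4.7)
\medskip

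\noindent\textbf{Proof plan.} The strategy is to reduce the problem to an empirical Wasserstein convergence question via a moment-weighted inequality between $\GW$ and $W_4$. Concretely, I would first establish the deterministic bound
\[
\GW(\mu,\nu) \lesssim \bigl(M_4(\mu) + M_4(\nu)\bigr)^{1/4}\, W_4(\mu,\nu),
\]
where $M_4(\mu) \defeq \E_\mu[\|X\|^4]$. To derive it, plug a $W_4$-optimal coupling $\pi \in \Pi(\mu,\nu)$ into the $\GW$ objective and apply the elementary factorization
\[
\bigl|\|x-x'\|^2 - \|y-y'\|^2\bigr| \leq \bigl(\|x-y\| + \|x'-y'\|\bigr)\bigl(\|x-x'\| + \|y-y'\|\bigr),
\]
followed by Cauchy--Schwarz on the product measure $\pi \otimes \pi$. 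The first factor integrates to a constant multiple of $W_4^4(\mu,\nu)$ by marginalization, while the second collapses to a multiple of $M_4(\mu) + M_4(\nu)$ via $\|x-x'\|^4 \lesssim \|x\|^4 + \|x'\|^4$ and the fact that the $\pi$-marginals are $\mu$ and $\nu$.

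Next, I would specialize to $\nu = \hat\mu_n$ and take expectations, applying Cauchy--Schwarz in the outer expectation:
\[
\E\bigl[\GW(\mu,\hat\mu_n)\bigr] \lesssim \E\bigl[(M_4(\mu) + M_4(\hat\mu_n))^{1/2}\bigr]^{1/2}\,\E\bigl[W_4^2(\mu,\hat\mu_n)\bigr]^{1/2}.
\]
Since $k \geq 4$, Jensen's inequality together with $M_4(\mu) \leq M_k(\mu)^{4/k} \leq \sigma^4$ (and $\E[M_4(\hat\mu_n)] = M_4(\mu)$) bounds the moment factor by $\lesssim \sigma$. For the Wasserstein factor, Jensen again yields $\E[W_4^2]^{1/2} \leq \E[W_4^4]^{1/4}$, and the Fournier--Guillin empirical convergence bound for $W_p$ applies at $p = 4$ in the regime $d > 8$, $k > 4$, giving $\E[W_4^4(\mu,\hat\mu_n)] \lesssim \sigma^4 n^{-4/d}$. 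Multiplying the two factors produces $\E[\GW(\mu,\hat\mu_n)] \lesssim \sigma^2 n^{-1/d}$, and taking the supremum over $\mu \in \cG_k(\sigma)$ completes the proof.

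The main obstacle is calibrating the deterministic inequality. The quadratic GW cost is bilinear in pairs of points, so a naive single-coupling bound entangles the transport cost with ambient norms; the Cauchy--Schwarz split on $\pi \otimes \pi$ is what decouples the $W_4$ contribution from the moment contribution. Choosing the $W_4$-optimal coupling (rather than, e.g., the $W_2$-optimal one) is essential to align the exponents with the quartic distortion, and in turn the thresholds $d > 8$ and $k > 4$ are exactly those making the Fournier--Guillin rate for $W_4$ equal to $n^{-1/d}$. Beyond these choices, the remainder is routine Jensen/Hölder bookkeeping.
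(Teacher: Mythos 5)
Your proposal is correct and follows essentially the same route as the paper: reduce $\GW$ to $W_4$ via a moment-weighted comparison inequality, split the expectation by H\"older/Cauchy--Schwarz, and invoke the Fournier--Guillin rate at $p=4$. The paper simply cites the deterministic inequality $\GW(\mu,\nu) \lesssim ((\mu+\nu)(\|x\|^4))^{1/4} W_4(\mu,\nu)$ as Lemma~1 of \cite{zhang2024gromov}, whereas you rederive it directly from the factorization $\bigl|\|x-x'\|^2 - \|y-y'\|^2\bigr| \leq (\|x-y\|+\|x'-y'\|)(\|x-x'\|+\|y-y'\|)$ and Cauchy--Schwarz on $\pi\otimes\pi$ --- a welcome self-contained addition. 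Your outer split via Cauchy--Schwarz (exponents $2,2$) differs cosmetically from the paper's H\"older split (exponents $4,4/3$), but both feed into Jensen and land at the same bound $\sigma\cdot\E[W_4^4(\mu,\hat\mu_n)]^{1/4}\lesssim \sigma^2 n^{-1/d}$.
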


\subsection{The Asymmetric Case}
For the asymmetric case we no longer require $\mu$ and $\nu$ to lie in the same family or have the same ambient dimension. We are given $\mu \in \cG_1 \subseteq \cP(\R^{d_1})$ and $\nu \in \cG_2 \subseteq \cP(\R^{d_2})$.

\begin{theorem}
\label{thm:asym_pop-limit+n_sample}
For $\eps \in [0, 0.49]$, we have
\begin{align*}
    R_n(\cG_1,\cG_2,\eps) &\gtrsim R_\infty(\cG_1, \cG_2,\eps/4) + R_n(\cG_1, \cG_2,0)   
\end{align*}
If further $\eps \leq 0.33$ and $\cG_1, \cG_2$ are restricted to any of the families from \cref{thm:sym_pop-limit} or \cref{cor:pop-limit-examples}, then
\begin{align*}
    R_\infty(\PGW, \cG_1,\cG_2,\eps) &\asymp R_\infty(\cG_1,\cG_2,\eps) \asymp R_\infty(\cG_1 \cup \cG_2,\eps)\\
    R_n(\PGW, \cG_1,\cG_2,\eps) &\lesssim R_\infty(\cG_1 \cup \cG_2,\eps) + \tau_n(\cG_1 \cup \cG_2).
\end{align*}
\end{theorem}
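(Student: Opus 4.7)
The plan is to reduce the asymmetric case almost entirely to the symmetric results established in \cref{thm:sym_pop-limit,cor:pop-limit-examples,thm:sym_n_sample}, exploiting the trivial inclusion $\cG_1 \times \cG_2 \subseteq (\cG_1 \cup \cG_2)^2$. I would break the theorem into three pieces: the lower bound on $R_n$, the population-limit equivalence $R_\infty(\PGW,\cG_1,\cG_2,\eps) \asymp R_\infty(\cG_1,\cG_2,\eps) \asymp R_\infty(\cG_1 \cup \cG_2,\eps)$, and the finite-sample upper bound for $\PGW$.

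For the lower bound, I would prove the two summands separately and combine them via $R_n \gtrsim \tfrac{1}{2}(a+b)$ from $R_n \geq a$ and $R_n \geq b$. The bound $R_n(\cG_1,\cG_2,\eps) \geq R_n(\cG_1,\cG_2,0)$ is immediate since the $\eps=0$ adversary class is contained in the $\eps>0$ class (the identity corruption is always available). For $R_n(\cG_1,\cG_2,\eps) \gtrsim R_\infty(\cG_1,\cG_2,\eps/4)$, I would use a standard simulation argument: given population-level measures $\mu,\nu$ and an $(\eps/4)$-TV corruption $\tilde\mu,\tilde\nu$, construct a sample-level corruption by resampling roughly $n\eps/4$ sample points so that the corrupted empirical measure matches samples from $\tilde\mu$. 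A Chernoff bound shows the number of indices that must be modified is at most $n\eps$ with constant probability, so the sample-level adversary fits within its $\eps$-fraction budget and the population-level minimax lower bound transfers (with a constant loss).

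For the population-limit equivalence and the finite-sample upper bound, the easy directions are supremum monotonicity: $R_\infty(\cG_1,\cG_2,\eps) \leq R_\infty(\cG_1 \cup \cG_2,\eps)$ and $R_\infty(\PGW,\cG_1,\cG_2,\eps) \leq R_\infty(\PGW,\cG_1 \cup \cG_2,\eps)$, and analogously for $R_n$. Since $\cG_1 \cup \cG_2$ still lies in the moment-bounded classes of \cref{thm:sym_pop-limit,cor:pop-limit-examples} with parameters $(k_1 \wedge k_2, \sigma_1 \vee \sigma_2)$, applying the symmetric theorems directly yields $R_\infty(\PGW,\cG_1 \cup \cG_2,\eps) \asymp R_\infty(\cG_1 \cup \cG_2,\eps)$ and $R_n(\PGW,\cG_1 \cup \cG_2,\eps) \lesssim R_\infty(\cG_1 \cup \cG_2,\eps) + \tau_n(\cG_1 \cup \cG_2)$, giving the claimed upper bounds in the asymmetric setting. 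The reverse direction $R_\infty(\cG_1,\cG_2,\eps) \gtrsim R_\infty(\cG_1 \cup \cG_2,\eps)$ requires lifting the two-point lower bound from \cref{thm:sym_pop-limit} to the asymmetric setting. The key observation is that the symmetric hard instance contrasts a trivial point mass against a spread measure of the form $(1-\eps)\delta_0 + \eps \delta_a$ tuned to saturate the moment constraint; since $\delta_0$ belongs to every moment-bounded family in any dimension, we can always place $\delta_0$ in whichever of $\cG_1,\cG_2$ would otherwise be incompatible, and place the spread measure in the family responsible for the dominant symmetric risk. This yields the asymmetric lower bound matching $R_\infty(\cG_1 \cup \cG_2,\eps)$.

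The main obstacle is this last step: constructing the two-point lower bound in the asymmetric setting, especially when $\cG_1$ and $\cG_2$ live on ambient spaces of different dimensions $d_1 \neq d_2$. One must check that the trivial measure $\delta_0$ and the spread measure $(1-\eps)\delta_0 + \eps \delta_a$ can indeed be placed on the appropriate $\R^{d_i}$ while retaining the GW distance gap; this is feasible because the two-point construction only involves one-dimensional radial structure (the value $\|a\|$), which embeds into any $\R^d$. Once this is verified, the remaining parts of the proof are either direct applications of \cref{thm:sym_pop-limit,cor:pop-limit-examples,thm:sym_n_sample} to the enlarged class $\cG_1 \cup \cG_2$, or trivial monotonicity of the minimax risk under shrinking the adversary's budget or the ground-truth class.
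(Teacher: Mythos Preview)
Your proposal is correct and follows essentially the same approach as the paper: the lower bound via the sampling-simulation argument plus monotonicity in $\eps$, the upper bounds via monotonicity to $\cG_1\cup\cG_2$ and the symmetric results (equivalently, direct application of \cref{lem:bound_res,lem:resilienceK}), and the crucial reverse inequality $R_\infty(\cG_1,\cG_2,\eps)\gtrsim R_\infty(\cG_1\cup\cG_2,\eps)$ via a two-point construction pairing the hard spread measure against $\delta_0$. The paper's version of the last step varies both coordinates, taking $\mu\in\{\mu_1,\delta_0\}$ and $\nu\in\{\nu_1,\delta_0\}$ and reading off $\tfrac12\max\{\GW(\mu_1,\delta_0),\GW(\nu_1,\delta_0)\}$, but this is equivalent to your one-coordinate version. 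One small bookkeeping point: when $d_1\neq d_2$, the paper avoids treating $\cG_1\cup\cG_2$ as a single moment class with merged parameters and instead phrases the upper bound as $R_\infty(\PGW,\cG_1\cup\cG_2,\eps)\lesssim R_\infty(\cG_1,\eps)\lor R_\infty(\cG_2,\eps)$, which sidesteps the ambient-dimension ambiguity you flagged; your embedding remark handles this too, so the difference is purely cosmetic.
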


Observe that there is a gap between the upper and lower bounds, at least for $\eps=0$. Specifically, using the plug-in estimator, we have $R_n(\cG_1,\cG_2,0) \leq \tau_n(\cG_1,\cG_2)$, which depends on the smaller of the two dimensions (at least when the measures have bounded support) as shown in \cite{zhang2024gromov}. In contrast, the upper bound in \Cref{thm:asym_pop-limit+n_sample} gives $R_n(\PGW, \cG_1,\cG_2,0) \lesssim \tau_n(\cG_1\cup\cG_2)$, which depends on the larger of the two. It is possible that another approach might refine the upper bound to match the lower bound. Regardless, the population-limit error term is tight and is governed by the larger of the two families. This discrepancy highlights the effect of adversarial contamination, which annuls the adaptive nature of GW estimation rates (with clean data) to the lower-dimensional dataset.

\section{Proofs}
\label{sec:proofs}
For partial couplings $\pi_1, \pi_2 \in \mathcal{M}_+(\mathcal{X}\times \mathcal{Y})$, write $G(\pi_1,\pi_2) \defeq \mathbb{E}_{\pi_1\otimes\pi_2}\big[%
 \Delta_2\big((X,X'),(Y,Y')\big)^2\big]^{1/2}$. 
 We also use $\mu \land \nu$ for the minimum of two measures, i.e.,  $(\mu \land \nu)(A) = \inf_B\{\mu(B)+ \nu(A\setminus B): B\subseteq A, B \text{ measurable}\}$. 
 For $\alpha \in \cM_+(\cX)$ and integrable $f:\cX \to \R$, we write $\alpha(f) \defeq \int f \dd \alpha$ and reserve the expectation notation $\mathbb{E}_\mu[f]$ for probability measures. Let $\langle \cdot,\cdot \rangle$ denote the Euclidean inner product.
 
\begin{proof}[Proof of \Cref{prop:metric}]
(i,$\Leftarrow$): Taking $T$ as in the statement, let $\mu' = T^{-1}_\sharp((T_\sharp \mu) \wedge \nu)$. This measure captures the mass of $\mu$ which is paired with $\nu$ by the isometry $T$. By the TV bound, $\mu'(\cX) \geq 1-\eps$, and so the partial coupling $\pi=(\Id,T)_\sharp \mu'\cdot \frac{1-\eps}{\mu'(\cX)}$ is feasible for the $\PGW$ problem. Thus, $\PGW(\mu,\nu) \leq \GW(\pi,\pi) = 0$. (i, $\Rightarrow$): Let $\mu'$ and $\nu'$ be the marginals of the optimal partial coupling. Since $\GW(\mu',\nu') = 0$, there is an isometry $T$ s.t.\ $\|T_\sharp \mu' - \nu'\|_\tv = 0$, and thus $\|T_\sharp \mu - \nu\|_\tv \leq \eps$.
(ii) is immediate from definition.
(iii): Let $\pi_1$ be an optimal partial coupling for $\PGW(\mu,\kappa)$, and decompose its marginals as $\mu_1 = \mu-\mu_{\eps}$ and $\kappa_1 = \kappa-\kappa_{\eps}$, where $\mu_\eps \in \cM_+(\cX), \nu_\eps \in \cM_+(\cY)$ satisfy $\mu_\eps(\cX) = \nu_\eps(\cY) = \eps$. Let $\pi_2$ be an optimal partial coupling for $\mathsf{GW}^{\delta}(\kappa,\nu)$, and similarly decompose its marginals as $\kappa_2 = \kappa-\kappa_{\delta}$ and $\nu_2 = \nu-\nu_{\delta}$. We divide $\kappa$ into four components: $A = \kappa_1-\kappa_1 \wedge \kappa_2$, $B = \kappa_1 \wedge \kappa_2$, $C = \kappa_2-\kappa_1 \wedge \kappa_2$, and $D = \kappa-\kappa_1-\kappa_2+\kappa_1 \wedge \kappa_2$. Notice $\kappa = A+B+C+D$, $(C+D)(\cY) = \kappa_{\eps}(\cY) = \eps$, and $(A+D)(\cY) = \kappa_{\delta}(\cY) = \delta$. Also, $B(\cY) \geq \kappa(\cY) - (C+D)(\cY) - (A+D)(\cY) = 1-\eps-\delta$. Next we decompose $\pi_1$ and $\pi_2$ into positive measures $\pi_1 = \pi_{1,A}+\pi_{1,B}$ and $\pi_2 = \pi_{B,2}+\pi_{C,2}$ such that $\pi_{1,A/B}$ have right marginals $A/B$ and $\pi_{B/C,2}$ have left marginals $B/C$. Gluing together $\pi_{1,B}$ and $\pi_{B,2}$ gives $\sigma \in \cM_+(\cX \times \cY \times \cZ)$ such that $(\Pi_{\cX,\cY})_\sharp\sigma = \pi_{1,B}$ and $(\Pi_{\cY,\cZ})_\sharp \sigma = \pi_{B,2}$, where $\Pi$ is the projection map. Take $\gamma = (\Pi_{\cX,\cZ})_\sharp\sigma \cdot \frac{1-\eps-\delta}{B(Y)}$. This is a feasible partial coupling for $\smash{\GW^{\eps+\delta}(\mu,\nu)}$, so we have
\begin{align*}
    \GW^{\eps+\delta}(\mu,\nu) &\leq \GW(\pi_{1,B}(\cdot,\cY),\pi_{B,2}(\cY,\cdot)) \cdot \tfrac{1-\eps-\delta}{B(\cY)} \\
    &\leq \GW(\pi_{1,B}(\cdot,\cY),\pi_{1,B}(\cX,\cdot)) + \GW(\pi_{1,B}(\cX,\cdot),\pi_{B,2}(Y,\cdot)) \\
    &\leq G(\pi_{1,B},\pi_{1,B}) + G(\pi_{B,2},\pi_{B,2})\\
    &\leq G(\pi_1,\pi_1) + G(\pi_2,\pi_2) \\
    &= \PGW(\mu,\kappa) + \GW^{\delta}(\kappa,\nu).
\end{align*}
The second inequality uses the $\GW$ triangle inequality.    
\end{proof}

\begin{definition}[Resilience w.r.t.\ $\GW$]\label{def:res}
For $0 \leq \eps \leq 1$, define the resilience of $\mu \in \cP(\cX)$ w.r.t. GW by $\rho(\mu,\eps) \defeq \sup_{\mu' \in \cP(\cX):\mu' \leq \frac{1}{1-\eps}\mu} \GW(\mu',\mu)$.

\end{definition}

\begin{lemma}
\label{lem:bound_res}
For $\eps \in [0,1/3)$, we have
\begin{align*}
    |\GW(\mu,\nu) - \PGW(\tilde{\mu}_n,\tilde{\nu}_n)| &\leq \GW(\mu,\hat{\mu}_n) + \GW(\nu,\hat{\nu}_n) + \rho(\mu,3\eps) + \rho(\nu,3\eps) + 3\eps\GW(\mu,\nu).
\end{align*}
\end{lemma}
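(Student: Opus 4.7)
The plan is to prove the two halves of the absolute value separately, both based on chaining the approximate triangle inequality of \cref{prop:metric}(iii) together with the definition of resilience. The single observation that powers the argument on both sides is that, since $\|\hat\mu_n-\tilde\mu_n\|_\tv\leq\eps$ (and likewise for $\nu$), \cref{prop:metric}(i) applied with $T=\mathrm{Id}$ forces $\PGW(\hat\mu_n,\tilde\mu_n)=\PGW(\hat\nu_n,\tilde\nu_n)=0$, so corruption can be absorbed ``for free'' against the partial-coupling budget $\eps$.

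For the lower bound $\GW(\mu,\nu)-\PGW(\tilde\mu_n,\tilde\nu_n)\leq\mathrm{RHS}$, I first apply \cref{prop:metric}(iii) twice to chain $\mu\rightsquigarrow\tilde\mu_n\rightsquigarrow\tilde\nu_n\rightsquigarrow\nu$, producing
\[
\GW^{3\eps}(\mu,\nu)\leq\GW^\eps(\mu,\tilde\mu_n)+\PGW(\tilde\mu_n,\tilde\nu_n)+\GW^\eps(\tilde\nu_n,\nu).
\]
An additional application of \cref{prop:metric}(iii) to each endpoint, with the clean empirical measure as intermediate, then uses the vanishing $\PGW$ above to bound $\GW^\eps(\mu,\tilde\mu_n)\leq\GW(\mu,\hat\mu_n)$ and analogously on the other side. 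I then translate $\GW^{3\eps}(\mu,\nu)$ back to $\GW(\mu,\nu)$ via resilience: a $\GW^{3\eps}$-optimal partial coupling decomposes as $(1-3\eps)\bar\pi$ with renormalized marginals $\bar\mu\leq\mu/(1-3\eps)$ and $\bar\nu\leq\nu/(1-3\eps)$, so the $\GW$ triangle inequality together with the mass-scaling identity $\GW^{3\eps}(\mu,\nu)=(1-3\eps)\GW(\bar\mu,\bar\nu)$ yields $\GW(\mu,\nu)\leq\GW^{3\eps}(\mu,\nu)/(1-3\eps)+\rho(\mu,3\eps)+\rho(\nu,3\eps)$. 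Combining this with the trivial bound $\GW^{3\eps}(\mu,\nu)\leq(1-3\eps)\GW(\mu,\nu)$ converts the residual factor $3\eps/(1-3\eps)$ into precisely $3\eps\,\GW(\mu,\nu)$.

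For the upper bound $\PGW(\tilde\mu_n,\tilde\nu_n)-\GW(\mu,\nu)\leq\mathrm{RHS}$, I construct a feasible partial coupling directly. Let $\mu^\star=\tilde\mu_n\wedge\hat\mu_n$ and $\nu^\star=\tilde\nu_n\wedge\hat\nu_n$, which have masses at least $1-\eps$; renormalize them to probability measures $\bar\mu_n,\bar\nu_n$. For any full coupling $\bar\pi$ of $(\bar\mu_n,\bar\nu_n)$, the measure $\pi'=(1-\eps)\bar\pi$ has marginals dominated by $\mu^\star\leq\tilde\mu_n$ and $\nu^\star\leq\tilde\nu_n$, so it is admissible for $\PGW(\tilde\mu_n,\tilde\nu_n)$. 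Taking the infimum over $\bar\pi$ and applying the $\GW$ triangle inequality with resilience (via $\bar\mu_n\leq\hat\mu_n/(1-\eps)$ and the analogous inequality on the $\nu$ side) then produces
\[
\PGW(\tilde\mu_n,\tilde\nu_n)\leq\GW(\mu,\nu)+\GW(\mu,\hat\mu_n)+\GW(\nu,\hat\nu_n)+\rho(\hat\mu_n,\eps)+\rho(\hat\nu_n,\eps).
\]

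The main obstacle is that this direct construction produces the \emph{empirical} resiliences $\rho(\hat\mu_n,\eps),\rho(\hat\nu_n,\eps)$, whereas the lemma asks for the \emph{population} versions $\rho(\mu,3\eps),\rho(\nu,3\eps)$. This gap cannot be closed by a symmetric application of \cref{prop:metric}(iii), because chaining through $\hat\mu_n,\hat\nu_n$ in order to upper-bound $\PGW(\tilde\mu_n,\tilde\nu_n)$ would demand partial-coupling budget on both endpoints and overrun the single trim $\eps$ available on the left. Closing the gap thus requires either a stability estimate of the form $\rho(\hat\mu_n,\eps)\lesssim\rho(\mu,3\eps)+\GW(\mu,\hat\mu_n)$ (e.g., by lifting the subsample $\bar\mu_n$ along a $\GW$-optimal coupling between $\hat\mu_n$ and $\mu$) or a more delicate construction that replaces $\bar\mu_n,\bar\nu_n$ with sub-measures of $\mu$ and $\nu$ directly, absorbing the discrepancy into the sampling terms and the $3\eps\,\GW(\mu,\nu)$ slack already present in the bound.
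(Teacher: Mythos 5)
Your first direction is sound and essentially the paper's own argument with a slightly different chaining: where you pass through $\tilde\mu_n,\tilde\nu_n$ directly with budgets $\eps+\eps+\eps$, the paper goes $\mu\to\hat\mu_n\to\tilde\mu_n\to\tilde\nu_n\to\hat\nu_n\to\nu$ and uses $\PGW(\hat\mu_n,\tilde\mu_n)=\PGW(\hat\nu_n,\tilde\nu_n)=0$; your version is algebraically equivalent, and your conversion of the $1/(1-3\eps)$ factor into the additive $3\eps\,\GW(\mu,\nu)$ slack matches the paper's resilience lower bound on $\GW^{3\eps}(\mu,\nu)$. The genuine gap is in the second direction, and you correctly diagnose it yourself: your construction only yields $\bar\mu_n\leq\hat\mu_n/(1-\eps)$, so the resilience terms you can extract are the empirical $\rho(\hat\mu_n,\eps),\rho(\hat\nu_n,\eps)$ rather than the population $\rho(\mu,3\eps),\rho(\nu,3\eps)$ that the statement requires, and you leave this unresolved. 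As written, the proof of this direction is incomplete.

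The fix is precisely your first suggested route (``lifting $\bar\mu_n$ along a $\GW$-optimal coupling between $\hat\mu_n$ and $\mu$''), and it closes the gap with no loss. Take an optimal coupling $\pi_1$ for $\GW(\hat\mu_n,\mu)$ and split it by its left marginal into $\pi_1=\pi_1^{\mu'\to\mu}+\pi_1^{(\hat\mu_n-\mu')\to\mu}$, where $\mu'=\hat\mu_n\wedge\tilde\mu_n$ is your $\mu^\star$. The right marginal $\mu''$ of $\pi_1^{\mu'\to\mu}$ satisfies $\mu''\leq\mu$ and $\mu''(\cX)=m_{\mu'}\geq 1-\eps$, hence $\mu''/m_{\mu'}\leq\mu/(1-\eps)$ and $\GW(\mu''/m_{\mu'},\mu)\leq\rho(\mu,\eps)\leq\rho(\mu,3\eps)$ — a \emph{population} resilience. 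Meanwhile $\pi_1^{\mu'\to\mu}/m_{\mu'}$ is a coupling of $\mu'/m_{\mu'}$ and $\mu''/m_{\mu'}$, so $\GW(\mu'/m_{\mu'},\mu''/m_{\mu'})\leq G(\pi_1^{\mu'\to\mu},\pi_1^{\mu'\to\mu})/m_{\mu'}\leq\GW(\hat\mu_n,\mu)/(1-\eps)$. Chaining via the $\GW$ triangle inequality gives $\GW(\mu'/m_{\mu'},\mu)\leq\GW(\hat\mu_n,\mu)/(1-\eps)+\rho(\mu,3\eps)$, and the overall $(1-\eps)$ prefactor coming from feasibility of the renormalized coupling cancels the $1/(1-\eps)$, yielding $\PGW(\tilde\mu_n,\tilde\nu_n)-\GW(\mu,\nu)\leq\GW(\mu,\hat\mu_n)+\GW(\nu,\hat\nu_n)+\rho(\mu,3\eps)+\rho(\nu,3\eps)$ exactly, with no extra slack and no empirical resilience term.
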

\begin{proof}
By the approximate triangle inequality for $\GW$, we have
\begin{align*}
        \GW^{3\eps} (\mu,\nu) &\leq \GW(\mu,\hat{\mu}_n) + \PGW(\hat{\mu}_n, \tilde{\mu}_n) + \PGW(\tilde{\mu}_n, \tilde{\nu}_n) + \PGW(\hat{\nu}_n, \tilde{\nu}_n) + \GW(\nu,\hat{\nu}_n)\\
        &=\GW(\mu,\hat{\mu}_n) + \GW(\nu,\hat{\nu}_n) + \PGW(\tilde{\mu}_n, \tilde{\nu}_n). 
\end{align*}
From definition of resilience, we obtain
\begin{align*}
        \mathsf{GW}^{3\eps} (\mu,\nu) &= (1-3\eps) \inf_{\substack{\mu' \in \cP(\cX), \nu' \in \cP(\cY)\\\mu' \leq \frac{1}{1-3\eps}\mu, \,\nu' \leq \frac{1}{1-3\eps}\nu}} \mathsf{GW}(\mu',\nu')\\
        &\geq (1-3\eps) \bigl(\mathsf{GW}(\mu,\nu) - \rho(\mu,3\eps) - \rho(\nu,3\eps)\bigr).
\end{align*}
Combining these inequalities gives one direction.\smallskip

For the other direction, let $\mu' = \hat{\mu}_n \wedge \tilde{\mu}_n$ and $\nu' = \hat{\nu}_n \wedge \tilde{\nu}_n$. Notice $\mu'$ and $\nu'$ have mass greater or equal to $1-\eps$. Denoting their total masses by $m_{\mu'}$ and $m_{\nu'}$, respectively, we have
\begin{align*}
    \PGW&(\tilde{\mu}_n,\tilde{\nu}_n) \leq \GW\left(\tfrac{(1-\eps)\mu'}{m_{\mu'}}, \tfrac{(1-\eps)\nu'}{m_{\nu'}}\right)\notag\\
    \leq& \left(\!\GW\left(\tfrac{\mu'}{m_{\mu'}}, \mu\right) \!+\! \GW\left(\tfrac{\nu'}{m_{\nu'}},\nu\right) \!+\! \GW(\mu,\nu)\right) (1\!-\!\eps)\numberthis\label{otherdirection}.
\end{align*}
To bound $\smash{\GW(\mu'/m_{\mu'}, \mu)}$, let $\pi_1$ be an optimal coupling for $\GW(\hat{\mu}_n,\mu)$ and decompose it as $\pi_1  = \pi_1^{\mu'\rightarrow\mu} + \pi_1^{(\hat{\mu}_n-\mu')\rightarrow\mu}$, where the superscript denotes the support of the marginals. Write $\mu''$ for the right marginal of $\pi_1^{\mu'\rightarrow\mu}$, satisfying $\mu''\leq \mu$ and $\mu''(\mathcal{X})=m_{\mu'}\geq 1-\eps$. Thus, $\mu'' \cdot \frac{1}{m_{\mu'}} \leq \frac{1}{1-\eps} \mu$, and so
\begin{align*}
    \GW\left(\tfrac{\mu'}{m_{\mu'}}, \mu\right) &\leq \GW\left(\tfrac{\mu'}{m_{\mu'}}, \tfrac{\mu''}{m_{\mu'}}\right) + \GW\left(\tfrac{\mu''}{m_{\mu'}},\mu\right) \\
    &\leq G(\pi_1^{\mu'\rightarrow\mu},\pi_1^{\mu'\rightarrow\mu}) \cdot \frac{1}{m_{\mu'}}+\rho(\mu,\eps)\\
    &\leq \GW(\mu,\hat{\mu}_n)\cdot \frac{1}{1-\eps}+\rho(\mu,3\eps).
\end{align*}
Similarly, we have
\begin{align*}
     \GW\left(\tfrac{\nu'}{m_{\nu'}}, \nu\right) \leq \GW(\nu,\hat{\nu}_n) \cdot \frac{1}{1-\eps}+\rho(\nu,3\eps).
\end{align*}
Plugging back into \eqref{otherdirection} we have
\begin{align*}
    \PGW(\tilde{\mu}_n,\tilde{\nu}_n) - \GW(\mu,\nu)
    \leq & \GW(\mu,\hat{\mu}_n) + \GW(\nu,\hat{\nu}_n) + \rho(\mu,3\eps) + \rho(\nu,3\eps),
\end{align*}
which gives the other direction.
\end{proof}

\begin{lemma}
\label{lem:resilienceK}
For $\mu \!\in\! \cG_k(\sigma)$, $k \!\geq\! 4$, and $0 \!\leq\!\eps \!\leq\! 0.99$, we have $\rho(\mu,\eps) \!\lesssim\! \sigma^2\eps^{\frac{1}{2}-\frac{2}{k}}$.   
\end{lemma}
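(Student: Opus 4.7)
The plan is to fix an arbitrary $\mu' \in \cP(\cX)$ with $\mu' \leq \mu/(1-\eps)$, exhibit an explicit coupling in $\Pi(\mu',\mu)$, and evaluate the resulting distortion integral by a case analysis followed by a single Hölder step. The dominance hypothesis makes $\mu'' \defeq (\mu - (1-\eps)\mu')/\eps$ a bona fide probability measure, with the decomposition $\mu = (1-\eps)\mu' + \eps\mu''$ identifying $\mu''$ as the ``trimmed'' mass, normalized. I would take the mixture coupling
\[
\pi = (1-\eps)\,(\Id,\Id)_\sharp \mu' + \eps\,\mu' \otimes \mu'',
\]
whose marginals are immediately checked to be $\mu'$ and $\mu$, so that $\GW(\mu',\mu)^2 \leq \E_{\pi\otimes\pi}[\Delta_2((X,X'),(Y,Y'))^2]$.

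I would then split the expectation according to which component each of $(X,Y)$ and $(X',Y')$ is drawn from. The only case with $\Delta_2 \equiv 0$ is when both lie on the diagonal; hence the nonzero contributions are supported on an event of probability at most $1-(1-\eps)^2 \leq 2\eps$. On that event, the elementary bounds $(a^2 - b^2)^2 \leq 2(a^4+b^4)$ and $\|u-v\|^4 \lesssim \|u\|^4 + \|v\|^4$ reduce the integrand to a sum of fourth powers of independent samples drawn from $\mu'$ or $\mu''$. Jensen's inequality applied to the convex map $t \mapsto t^{k/4}$, combined with $\mu' \leq \mu/(1-\eps)$ and $\mu'' \leq \mu/\eps$, yields
\[
\E_{\mu'}[\|X\|^4] \leq \E_{\mu'}[\|X\|^k]^{4/k} \leq \frac{\sigma^4}{(1-\eps)^{4/k}}, \qquad \E_{\mu''}[\|W\|^4] \leq \frac{\sigma^4}{\eps^{4/k}}.
\]

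Since $\eps \leq 0.99$, the first factor is $O(1)$ and the controlling bound is $\sigma^4 \eps^{-4/k}$. Multiplying this by the $\leq 2\eps$ probability of the off-diagonal event gives $\E_{\pi\otimes\pi}[\Delta_2^2] \lesssim \sigma^4\eps^{1-4/k}$, and taking a square root produces the target rate $\GW(\mu',\mu) \lesssim \sigma^2\eps^{1/2-2/k}$. Passing to the supremum over admissible $\mu'$ yields the lemma. I do not foresee any serious technical obstacle: the substantive design choice is the mixture coupling, which localizes the heavy-tailed trimmed component $\mu''$ to a small-probability event so that the Hölder trade $\eps \cdot \eps^{-4/k} = \eps^{1-4/k}$ hits the target exponent exactly.
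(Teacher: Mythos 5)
Your proof is correct and follows essentially the same strategy as the paper: decompose $\mu = (1-\eps)\mu' + \eps\mu''$, build the mixture coupling that is diagonal on a $(1-\eps)$ fraction of the mass, observe that the off-diagonal contribution lives on an event of probability $O(\eps)$, and trade that $\eps$ against the $\eps^{-4/k}$ blowup of the trimmed component's fourth moment to land on $\sigma^4\eps^{1-4/k}$. The only cosmetic differences are that the paper pairs the trimmed piece via an optimal coupling $\pi_{\alpha,\nu}$ (and then bounds $\GW(\alpha,\nu)$ with a short triangle-inequality detour through $\delta_0$) rather than your product coupling $\mu'\otimes\mu''$, and it phrases the moment bound via a set-restricted H\"older inequality rather than your direct Jensen step; neither choice changes the argument or the rate.
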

\begin{proof}
For $\lambda \in (0,1)$ and $\beta \in \cP(\cX)$ with $\beta \leq \frac{1}{\lambda}\mu$, we have
\begin{equation}
\label{eq:holder-bd}
    \mathbb{E}_\beta[\|X\|^4] \leq \lambda^{-1} \sup_{E:\mu(E) = \lambda} \mathbb{E}_\mu[\|X\|^4 \mathds{1}_E] \leq \sigma^4 \lambda^{-4/k}.
\end{equation}
Now, for $\nu \in \cP(\cX)$ with $\nu \leq \frac{1}{1-\eps}\mu$, write $\mu = (1-\eps)\nu + \eps \alpha$ for $\alpha \in \cP(\cX)$. Let $\pi_{\alpha,\nu}$ be an optimal coupling for $\GW(\alpha,\nu)$. Using $\pi = \eps \pi_{\alpha,\nu} + (1-\eps) (\Id, \Id)_\sharp \nu \in \Pi(\mu,\nu)$, we bound
\begin{equation*}
    \GW(\mu,\nu)^2 \leq \eps^2 \GW(\alpha,\nu)^2 + 2G\bigl(\eps \pi_{\alpha,\nu},(1-\eps) (\Id, \Id)_{\sharp} \nu\bigr)^2.
\end{equation*}
We deal with the two terms separately. First, we bound
\begin{align*}
    \GW(\alpha,\nu) &\leq \GW(\alpha,\delta_0) +  \GW(\nu,\delta_0) \\
    &= \mathbb{E}_{\alpha\otimes\alpha}[\|X_1-X_2\|^{4}]^{\frac{1}{2}} + \mathbb{E}_{\nu\otimes\nu}[\|X_1-X_2\|^{4}]^{\frac{1}{2}}\\
    &\leq 2 \sup_{\beta \leq \frac{1}{1-\tau}\mu} \mathbb{E}_{\beta\otimes\beta}[\|X_1-X_2\|^{4}]^{\frac{1}{2}}
\end{align*}
where $\tau = \eps \lor (1-\eps)$. By $(x-y)^4 \leq 8(x^4+y^4)$ and \eqref{eq:holder-bd},
\begin{align*}
        \mathbb{E}_{\beta\otimes\beta}[\|Y_1-Y_2\|^{4}]^{\frac{1}{2}} \lesssim \mathbb{E}_{\beta}[\|Y\|^{4}]^{\frac{1}{2}} \lesssim \sigma^2 (1-\tau)^{-\frac{2}{k}}.
\end{align*}
Again using \eqref{eq:holder-bd}, we similarly bound the second term
\begin{align*}
    G(\eps \pi_{\alpha,\nu},(1-\eps) (\Id, \Id)_\sharp \nu)^2 &\leq \eps(1-\eps) (\alpha \otimes \nu + \nu \otimes \nu)(\|x_1 - x_2\|^4)\\
    &\lesssim \eps(\alpha + \nu)(\|x\|^4)\\
    &\lesssim \eps \sup_{\beta \leq \frac{1}{1-\tau}\mu} \mathbb{E}_\beta[\|X\|^4] \\
    &\lesssim \sigma^{4} \eps (1-\tau)^{-\frac{4}{k}}.
\end{align*}
Combining, we obtain
\begin{align*}
    \GW(\mu,\nu) &\lesssim \sqrt{\sigma^4\eps^{2}(1-\tau)^{-\frac{4}{k}} + \sigma^{4} \eps (1-\tau)^{-\frac{4}{k}}} \lesssim \sigma^2\eps^{\frac{1}{2}-\frac{2}{k}}.
\end{align*}
using that $\tau^{-4/k} \lesssim \eps^{-4/k}$ for $\eps \leq 0.99$.
\end{proof}

\begin{proof}[Proof of \Cref{thm:sym_pop-limit}]
The upper bound follows directly from \Cref{lem:bound_res} (as $n \to \infty$), \Cref{lem:resilienceK}, and the fact that
\begin{align*}
    \GW(\mu,\nu) 
    &\leq \left(\mathbb{E}_{\mu\otimes\mu}[\|X_1-X_2\|^{4}] + \mathbb{E}_{\nu\otimes\nu}[\|Y_1-Y_2\|^{4}]\right)^{\frac{1}{2}}  \\
    &\leq \left( 16\mathbb{E}_{\mu}[\|X\|^{4}] + 16\mathbb{E}_{\nu}[\|Y\|^{4}]\right)^{\frac{1}{2}}
    \leq 8 \sigma^2,
\end{align*}
so every term is no greater than $O(\sigma^2\eps^{\frac{1}{2}-\frac{2}{k}})$.\smallskip

For the lower bound, consider $\mu_1 = (1-\eps) \delta_0 + \eps \delta_a$ where $a = \sigma/\eps^{\frac{1}{k}}$ so that $\mu_1 \in \cG_k(\sigma)$. We then have
\begin{align*}
    R_{\infty}(\cG_k(\sigma),\eps) &\geq \inf_{\mathsf{T}} \sup_{\substack{\mu \in \{\mu_1,\delta_0\}, \nu =\delta_0,\\\tilde{\mu} = \mu_1, \tilde{\nu} = \delta_0}}
    |\GW(\mu,\nu)-\mathsf{T}(\tilde{\mu},\tilde{\nu})|\\
    &\geq \frac{1}{2}\GW(\mu_1,\delta_0) = \frac{1}{2}(2(1-\eps))^{\frac{1}{2}} \sigma^2 \eps^{\frac{1}{2}-\frac{2}{k}}.
\end{align*}
The first inequality follows since $\mu_1,\delta_0 \in \cG_k(\sigma)$ and $\|\mu_1 - \delta_0\|_\tv \leq \eps$. The second uses that the best estimate is to take the midpoint of the range of all possible distances (we have only two  distances: $0$ and $\GW(\mu_1,\delta_0)$).
\end{proof}

\begin{proof}[Proof of \Cref{cor:pop-limit-examples}]
Upper bounds follow directly from inclusions to bounded moment families. Indeed, Lemma 6 in \cite{nietert2022statistical} gives that $\overline{\cG}_k(\sigma)\subseteq \cG_{k}(O(\sigma\sqrt{1+d/k}))$ for all $k$, and $\cG_{\mathrm{subG}}(\sigma) \subseteq \cap_{k \geq 1} \overline{\cG}(O(\sigma \sqrt{k})) \subseteq \overline{\cG}_{\lceil \log(1/\eps) \rceil}(O(\sigma \sqrt{\log(1/\eps)}\,))$.\smallskip

The sub-Gaussian lower bound can be obtained using the same construction as in Theorem 2 in \cite{nietert2023robust}.
For $\overline{\cG}_k(\sigma)$, consider $X\sim\mu = (1-\epsilon)\delta_0+\epsilon\cN(0,c^2 I_d)$, where $c = \epsilon^{-\frac{1}{k}} \sigma k^{-\frac{1}{2}}$ and $\cN$ the Gaussian measure. Then, for each unit vector $v$, we have $\langle X, v\rangle \sim (1-\epsilon)\delta_0+\epsilon\cN(0,c^2)$, and $\mathbb{E}_\mu[|\langle X, v\rangle|^k] = \epsilon \sigma_d^k \cdot (k-1)!!\leq \sigma^k$. Further, $\GW(\mu,\delta_0) \geq (2\epsilon(1-\epsilon)\cdot \E_{\cN(0,c^2)}[\|X\|^{4}])^{\frac{1}{2}}$, where $\E_{\cN(0,c^2)}[\|X\|^{4}] = c^{4}\cdot 4\cdot \frac{\Gamma(2+d/2)}{\Gamma(d/2)} \geq c^{4}\cdot d^{2}$. Plugging back we get $\GW(\mu,\delta_0) \geq \sigma^2 \epsilon^{\frac{1}{2}-\frac{2}{k}} \cdot d/k$. Using the two-point method above, we have $R_{\infty}(\overline{\cG}_k(\sigma),\eps) \gtrsim \sigma^2 \epsilon^{\frac{1}{2}-\frac{2}{k}} \cdot d/k$. The construction in proof of \Cref{thm:sym_pop-limit} also gives $R_{\infty}(\overline{\cG}_k(\sigma),\eps) \geq \sigma^2 \epsilon^{\frac{1}{2}-\frac{2}{k}}$ since $\mu_1\in \overline{\cG}_k(\sigma)$. Combining gives the desired bound.
\end{proof}

\begin{proof}[Proof of \Cref{thm:sym_n_sample}]
Using the same approach as in the proof of Theorem $3$ in \cite{nietert2023robust}, we obtain $R_n(\cG,\eps) \gtrsim R_\infty(\cG,\eps/4)$. Further, $R_n(\cG,\eps) \geq R_n(\cG,0)$ from definition. Together this gives the lower bound.
The upper bound for specific families follows similarly to the population limit result using \Cref{lem:bound_res,lem:resilienceK} (using inclusions into $\overline{\cG}_k(\sigma)$ for the sub-Gaussian and sliced moment classes).
\end{proof}

\begin{proof}[Proof of \cref{prop:empirical-convergence}]
Fix $\mu \in \cG_k(\sigma)$, with $d > k > 2pq$. By Lemma 1 in \cite{zhang2024gromov} and H\"older's inequality, we have
\begin{align*}
    \E[\GW(\mu,\hat{\mu}_n)] &\lesssim \E[((\mu+\hat{\mu}_n)(\|x\|^4))^{\frac{1}{4}}\mathsf{W}_{4}(\mu,\hat{\mu}_n)] \\
    &\leq \E[(\mu+\hat{\mu}_n)(\|x\|^4)]^{\frac{1}{4}} \cdot \E[\mathsf{W}_{4}(\mu,\hat{\mu}_n)^{\frac{4}{3}}]^{\frac{3}{4}}\\
    &\leq (2\sigma^4)^{\frac{1}{4}}\cdot \E[\mathsf{W}_{4}(\mu,\hat{\mu}_n)^{4}]^{\frac{1}{4}}\\
    &\lesssim \sigma \cdot \E[\mathsf{W}_{4}(\mu,\hat{\mu}_n)^{4}]^{\frac{1}{4}}.
\end{align*}
Combining with standard empirical convergence results for $\mathsf{W}_4$ (e.g., \citealp{fournier2015rate}), we obtain the stated upper bound.
\end{proof}

\begin{proof}[Proof of \Cref{thm:asym_pop-limit+n_sample}]
The first lower bound can be obtained similarly to that of \Cref{thm:sym_n_sample}. Further, risk definitions imply
\begin{equation*}
    R_\infty(\cG_1,\cG_2,\eps) \!\lesssim\! R_\infty(\PGW, \cG_1,\cG_2,\eps) \!\lesssim\! R_\infty(\PGW, \cG_1\cup \cG_2,\eps).
\end{equation*}
The upper bound proof for \cref{thm:sym_pop-limit} and the lower bounds from \Cref{cor:pop-limit-examples} imply that $R_\infty(\PGW, \cG_1\cup \cG_2,\eps) \lesssim R_\infty(\cG_1,\eps) \lor R_\infty(\cG_2,\eps)$. By definitions, we can lower bound $R_\infty(\cG_1\cup \cG_2,\eps)$ by
\begin{equation*}
    R_\infty(\cG_1\cup \cG_2,\eps) \geq R_\infty(\cG_1,\eps) \lor R_\infty(\cG_2,\eps),
\end{equation*}
so we obtain $R_\infty(\cG_1\cup \cG_2,\eps) \asymp R_\infty(\PGW, \cG_1\cup \cG_2,\eps)$. Thus, to prove the first statement, it suffices to show $R_\infty(\cG_1,\cG_2,\eps) \gtrsim R_\infty(\cG_1\cup \cG_2,\eps)$.
To see this, notice for any $\mu_1\in\cG_1,\nu_1\in\cG_2$, we have
$$
\begin{aligned}
    R_{\infty}(\cG_1, \cG_2, \eps) &\geq \inf_{\sfT} \sup_{\substack{\mu \in \{\mu_1,\delta_0\}, \nu \in \{\nu_1,\delta_0\},\\\tilde{\mu} = \mu_1, \tilde{\nu} = \nu_1}}
|\mathsf{GW}(\mu,\nu)-\mathsf{T}(\tilde{\mu},\tilde{\nu})|\\
&= \frac{1}{2}\left|\max_{\substack{\mu \in \{\mu_1,\delta_0\}\\\nu \in \{\nu_1,\delta_0\}}} \GW(\mu,\nu)  - \min_{\substack{\mu \in \{\mu_1,\delta_0\}\\ \nu \in \{\nu_1,\delta_0\}}} \GW(\mu,\nu)\right|\\
&\geq \frac{1}{2}\max \{\GW(\mu_1,\delta_0),\GW(\nu_1,\delta_0)\}.
\end{aligned}
$$
Taking $\mu_1$, $\nu_1$ be those that achieved lower bound in the symmetric case, we obtain
$R_{\infty}(\cG_1, \cG_2, \eps) \gtrsim R_{\infty}(\cG_1,\eps) \lor R_{\infty}(\cG_2,\eps) = R_\infty(\cG_1 \cup \cG_2,\eps)$.\smallskip

The second statement follows directly by \Cref{lem:bound_res}.
\end{proof}

\section{Concluding Remarks}\label{sec:summary}

We studied the problem of robust estimation of the quadratic GW distance under the $\eps$-TV contamination model. The proposed estimator computed the partial GW distance between the contaminated observations. For chosen distribution families, the estimator was shown to be minimax optimal in the population limit and near-optimal when sampling is present. Notably, the robust estimation risk scales with the complexity of the union of the two distribution families, as opposed to the empirical estimation setting from clean data, where convergence rates adapt to the lower-dimensional dataset \cite{zhang2024gromov}. The results pose the partial GW problem as a natural outlier-robust surrogate of the classical setting, providing an operational meaning through the optimality for the robust estimation setting.

\clearpage
\bibliographystyle{unsrtnat}
\bibliography{ref}

\end{document}